\newtheorem{theorem}{Theorem}
\theoremstyle{definition}
\begin{document}
\title{Enumerating Cliques in Direct Product Graphs}
\author{Colin Defant}
\address{Princeton University \\ Fine Hall, 304 Washington Rd. \\ Princeton, NJ 08544}
\email{cdefant@princeton.edu}

\begin{abstract}
The unitary Cayley graph of $\mathbb Z/n\mathbb Z$, denoted $G_{\mathbb Z/n\mathbb Z}$, is the graph with vertices $0,1,\ldots,$ $n-1$ in which two vertices are adjacent if and only if their difference is relatively prime to $n$. These graphs are central to the study of graph representations modulo integers, which were originally introduced by Erd\H{o}s and Evans. We give a brief account of some results concerning these beautiful graphs and provide a short proof of a simple formula for the number of cliques of any order $m$ in the unitary Cayley graph $G_{\mathbb Z/n\mathbb Z}$. This formula involves an exciting class of arithmetic functions known as Schemmel totient functions, which we also briefly discuss. More generally, the proof yields a formula for the number of cliques of order $m$ in a direct product of balanced complete multipartite graphs.   
\end{abstract}

\maketitle

\bigskip

\noindent 2010 {\it Mathematics Subject Classification}: Primary 05C30; Secondary 05C69.  

\noindent \emph{Keywords: unitary Cayley graph; clique; Schemmel totient function; direct product graph; balanced complete multipartite graph.}

\section{Unitary Cayley Graphs and Schemmel Totient Functions}
Let $R$ be a commutative ring with unity. The \emph{unitary Cayley graph} of $R$, denoted $G_R$, is the graph whose vertices are the elements of $R$ in which two vertices are adjacent if and only if their difference is a unit in $R$. In symbols, $G_R$ has vertex set $V(G_R)=R$ and edge set $E(G_R)=\{\{x,y\}\colon x-y\in R^\times\}$. Unitary Cayley graphs have featured prominently in the literature of the past two decades \cite{Akhtar, Beaudrap, Berrizbeitia, burcroff, Defant4, Dejter, defantandiyer, Fuchs, Ilic, Kiani, Kiani2, Klotz, Liu, Liu2, Madhavi, Maheswari, Maheswari2, Ramaswamy}, most commonly in the special case $R=\mathbb Z/n\mathbb Z$ for some integer $n\geq 2$. 

One can view the unitary Cayley graph $G_{\mathbb Z/n\mathbb Z}$ as the graph with vertices $0,1,\ldots,n-1$ in which two vertices are adjacent if and only if their difference is relatively prime to $n$. This number-theoretic definition leads to several interesting number-theoretic properties of these graphs. For example, the number of edges in $G_{\mathbb Z/n\mathbb Z}$ is given by \begin{equation}\label{Eq3}
|E(G_{\mathbb Z/n\mathbb Z})|=\frac{1}{2}n\varphi(n),
\end{equation} where $\varphi$ is Euler's totient function. The clique number of $G_{\mathbb Z/n\mathbb Z}$, defined to be the largest integer $k$ such that $G_{\mathbb Z/n\mathbb Z}$ has a clique of order $k$, turns out to be the smallest prime factor of $n$. This, in turn, is also the chromatic number of $G_{\mathbb Z/n\mathbb Z}$ \cite{Klotz}. Klotz and Sander have also shown that the eigenvalues of $G_{\mathbb Z/n\mathbb Z}$ (that is, the eigenvalues of an adjacency matrix of $G_{\mathbb Z/n\mathbb Z}$) are integers that divide $\varphi(n)$ (in fact, they are given by Ramanujan sums) \cite{Klotz}. Figure \ref{Fig1} depicts the graphs $G_{\mathbb Z/n\mathbb Z}$ for $2\leq n\leq 10$.   

\begin{figure}[t]
\begin{center}\includegraphics[height=6.5cm]{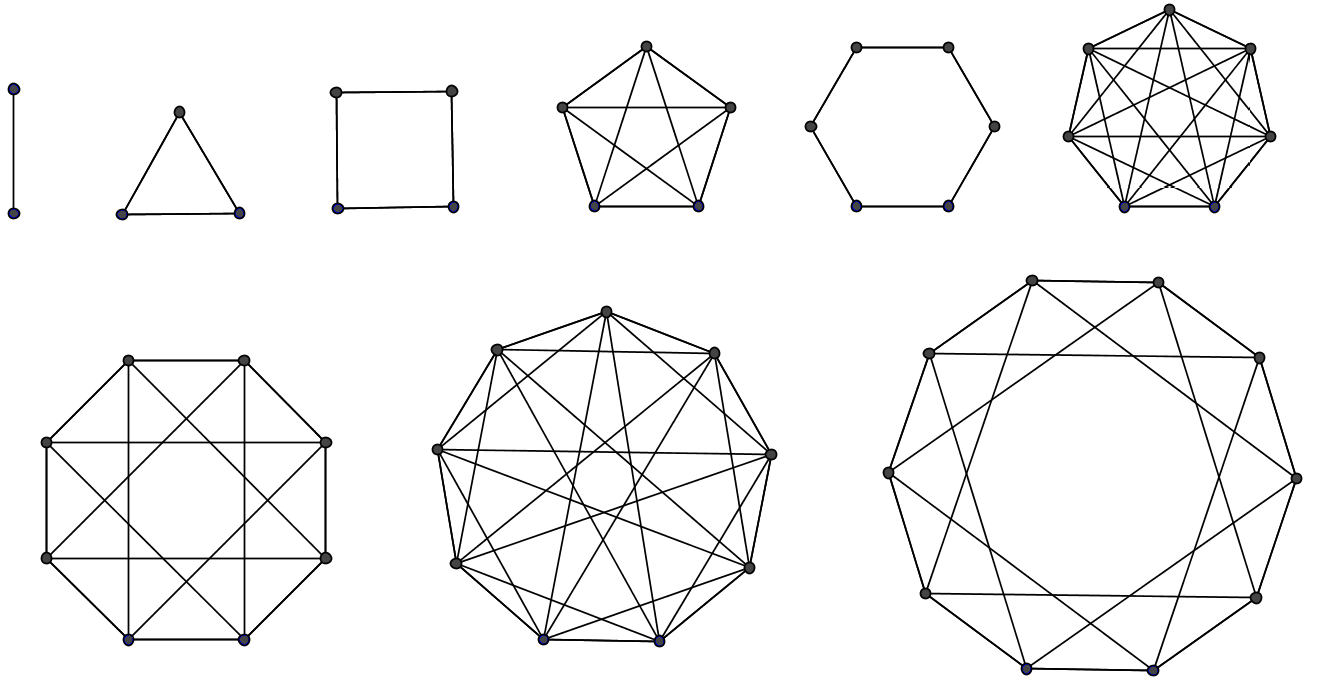}
\end{center}
\captionof{figure}{The unitary Cayley graphs of $\mathbb Z/n\mathbb Z$ for $2\leq n\leq 10$.} \label{Fig1}
\end{figure}

One motivation for studying the unitary Cayley graphs $G_{\mathbb Z/n\mathbb Z}$, other than their inherent beauty and interesting properties, arises from the theory of graph representations modulo integers. Erd\H{o}s and Evans \cite{Erdos} defined a graph $G$ to be \emph{representable modulo $n$} if there exists a labeling of the vertices of $G$ with distinct elements of $\{1,2,\ldots,n\}$ such that two vertices are adjacent if and only if the difference between their labels is relatively prime to $n$. In other words, $G$ is representable modulo $n$ if it is isomorphic to an induced subgraph of $G_{\mathbb Z/n\mathbb Z}$. These authors then proved that every finite simple graph is representable modulo some positive integer. The \emph{representation number} of a graph $G$ is the smallest integer $n$ such that $G$ is representable modulo $n$. This definition has garnered a huge amount of interest as researchers have investigated the representation numbers of various graphs \cite{Akhtar, Akhtar2, Akhtar3, Evans, Evans2, Narayan}. See Gallian's ``Dynamic Survey of Graph Labeling" for more information about the representation numbers of graphs and for additional references \cite{Gallian}. The topic of graph representations modulo integers, which revolves around induced subgraphs of the graphs $G_{\mathbb Z/n\mathbb Z}$, certainly motivates the study of such graphs. 

When Dejter and Giudici introduced unitary Cayley graphs in 1995, they showed that
\begin{equation}\label{Eq1}
T(G_{\mathbb Z/n\mathbb Z})=\frac 16n\varphi(n)S_2(n),
\end{equation} 
where $T(G)$ denotes the number of triangles in the graph $G$ and $S_2$ is the $2^\text{nd}$ Schemmel totient function \cite{Dejter}. For each nonnegative integer $r$, the $r^\text{th}$ Schemmel totient function $S_r$ is the multiplicative arithmetic function that satisfies 
\begin{equation} \label{Eq2} 
S_r(p^{\alpha})=\begin{cases} p^{\alpha-1}(p-r), & \mbox{if } p\geq r; \\ 0, & \mbox{if } p<r \end{cases} 
\end{equation} 
for all primes $p$ and positive integers $\alpha$ (here, ``multiplicative" means that $S_r(ab)=S_r(a)S_r(b)$ whenever $\gcd(a,b)=1$). Note that $S_0(n)=n$. 

As their name suggests, the Schemmel totient functions are generalizations of Euler's totient function that were originally introduced by Schemmel \cite{Schemmel}. Indeed, $S_1=\varphi$. The standard combinatorial interpretation of the Euler totient function is that $\varphi(n)$ is the number of integers less than or equal to $n$ that are positive and relatively prime to $n$. Note that $S_0(n)=n$ has a similar combinatorial interpretation: it is the number of integers less than or equal to $n$ that are positive! More generally, $S_r(n)$ is the number of positive integers $k\leq n$ such that $\gcd(k+i,n)=1$ for all $i\in\{0,1,\ldots,r-1\}$ \cite{Schemmel}. The Schemmel totient functions seldom appear in the wild, which is unfortunate because they have a certain attractive mystique. It is worth noting, however, that Lehmer found the Schemmel totient functions emerge in the solutions of certain enumerative problems concerning magic squares \cite{Lehmer}. Moreover, the number-theoretic properties of the Schemmel totient functions have been studied in their own right \cite{Defant, Defant2, Defant3, McCarthy, Morgado, Subbarao, Subbarao2}. 

In 2007, Klotz and Sander \cite{Klotz} gave an alternative proof of \eqref{Eq1}. Madhavi and Maheswari \cite{Madhavi} then (apparently independently) rediscovered this result in 2010. These two papers and the original paper of Dejter and Giudici are certainly interesting, but they all fail to phrase the formula in \eqref{Eq1} in the ``correct" way. These papers all prove that the number of triangles in $G_{\mathbb Z/n\mathbb Z}$ is $\dfrac 16n\varphi(n)S_2(n)$. The ``correct" way to phrase this theorem is as follows: 

\begin{center} 
The number of cliques of order $3$ in $G_{\mathbb Z/n\mathbb Z}$ is
\end{center} \[\frac{S_0(n)}{1}\cdot\frac{S_1(n)}{2}\cdot\frac{S_2(n)}{3}.\]   

The number of cliques of order $1$ (i.e., the number of vertices) in $G_{\mathbb Z/n\mathbb Z}$ is simply $n$, which we can write as $\dfrac{S_0(n)}{1}$. According to \eqref{Eq3}, the number of cliques of order $2$ (which is simply the number of edges) in $G_{\mathbb Z/n\mathbb Z}$ is $\dfrac 12n\varphi(n)$, which we may rewrite as $\dfrac{S_0(n)}{1}\cdot\dfrac{S_1(n)}{2}$. This naturally leads us to speculate that the number of cliques of order $m$ in $G_{\mathbb Z/n\mathbb Z}$ is \begin{equation}\label{Eq5} 
\prod_{k=1}^m\frac{S_{k-1}(n)}{k}.
\end{equation} This assertion does indeed hold, as the author proved in slightly greater generality in \cite{Defant4}. We mentioned before that the clique number of $G_{\mathbb Z/n\mathbb Z}$ is equal to the smallest prime factor of $n$; note that this follows as an easy corollary to the above formula \eqref{Eq5}. 

The primary purpose of this article is to give a simplified proof of the formula \eqref{Eq5} in a much more natural and general framework. Namely, we will prove a formula for the number of cliques of order $m$ in a direct product of balanced complete multipartite graphs. 

\section{Direct Products of Balanced complete Multipartite Graphs}

Let $G$ be a graph with vertex set $V(G)$. We say $G$ is a \emph{complete $b$-partite graph} if there is a partition of $V(G)$ into $a$ parts $B_1,\ldots,B_b$, called the \emph{partite sets}, such that two vertices are adjacent if and only if they do not belong to the same partite set. A complete multipartite graph is called \emph{balanced} if the partite sets all have the same cardinality. Let $K[a,b]$ denote the balanced complete $b$-partite graph in which every partite set contains $a$ vertices. For any prime $p$ and positive integer $\alpha$, $G_{\mathbb Z/p^\alpha\mathbb Z}\cong K[p^{\alpha-1},p]$. Indeed, viewing the vertices of $G_{\mathbb Z/p^\alpha\mathbb Z}$ as $0,1,\ldots,p^\alpha-1$, the partite sets are simply the different residue classes modulo $p$. 

The \emph{direct product} (also called the \emph{tensor product}, \emph{Kronecker product}, \emph{weak product}, or \emph{conjunction}) of graphs $H_1,\ldots,H_r$ with vertex sets $V(H_1),$ $\ldots,V(H_r)$, denoted $\prod_{i=1}^r H_i$, is a graph whose vertex set is the cartesian product $V(H_1)\times\cdots\times V(H_r)$. Two vertices $(y_1,\ldots,y_r)$ and $(z_1,\ldots,z_r)$ of the direct product are adjacent if and only if $y_i$ is adjacent to $z_i$ in $H_i$ for all $1\leq i\leq r$. It follows immediately from the Chinese remainder theorem and the preceding paragraph that $G_{\mathbb Z/n\mathbb Z}$ is isomorphic to a direct product of balanced complete multipartite graphs. More specifically, if $p_1^{\alpha_1}\cdots p_r^{\alpha_r}$ is the prime factorization of $n$, then $G_{\mathbb Z/n\mathbb Z}\cong\prod_{i=1}^r G_{\mathbb Z/p_i^{\alpha_i}\mathbb Z}\cong\prod_{i=1}^r K[p_i^{\alpha_i-1},p_i]$. 

In fact, if $R$ is any finite commutative ring with unity, then the authors of \cite{Akhtar4} have shown that the unitary Cayley graph $G_R$ is isomorphic to a direct product of balanced complete multipartite graphs. Their argument boils down to observing that every finite ring is a direct product (as a ring) of finite local rings and then showing that the unitary Cayley graph of a finite local ring is isomorphic to a balanced complete multipartite graph. This suggests that it is natural to study direct products of balanced complete multipartite graphs as generalizations of (finite) unitary Cayley graphs. 

For any positive integers $x,y,m$, put $\mathcal S_m(x,y)=\max\{x(y-m),0\}$. If $p$ is a prime and $\alpha$ is a positive integer, then $S_m(p^\alpha)=\mathcal S_m(p^{\alpha-1},p)$, where $S_m$ denotes the $m^\text{th}$ Schemmel totient function. We are now in a position to state and prove our generalization of the formula in \eqref{Eq5}.

\begin{theorem}
Let $a_1,\ldots,a_r,b_1,\ldots,b_r,m$ be positive integers. Let $K[a_i,b_i]$ be the balanced complete $b_i$-partite graph in which each partite set contains $a_i$ vertices. Let $X=\prod_{i=1}^rK[a_i,b_i]$ be the direct product of the graphs $K[a_i,b_i]$. The number of cliques of order $m$ in $X$ is \[\frac{1}{m!}\prod_{k=1}^m\prod_{i=1}^r\mathcal S_{k-1}(a_i,b_i).\]
\end{theorem}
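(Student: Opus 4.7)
The plan is to count \emph{ordered} $m$-tuples of vertices that form a clique in $X$ and then divide by $m!$ to obtain the number of (unordered) cliques.

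First I would unpack what it means for an ordered tuple $(w_1,\ldots,w_m)$ of vertices of $X$, with $w_j=(v_1^{(j)},\ldots,v_r^{(j)})$, to form a clique. Adjacency in a direct product requires adjacency in every factor, and adjacency in $K[a_i,b_i]$ means lying in distinct partite sets. Hence $(w_1,\ldots,w_m)$ is a clique precisely when, for each coordinate $i\in\{1,\ldots,r\}$, the $m$ projections $v_i^{(1)},\ldots,v_i^{(m)}$ lie in $m$ pairwise distinct partite sets of $K[a_i,b_i]$. (This already forces the $w_j$ to be pairwise distinct.)

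Next I would exploit the fact that these coordinatewise constraints are independent across the $r$ coordinates. For a fixed coordinate $i$, I would count ordered tuples of $m$ vertices of $K[a_i,b_i]$ lying in $m$ distinct partite sets by a sequential argument: after $k-1$ choices have been made, the $k$-th vertex must avoid the $k-1$ partite sets already used, leaving $a_i(b_i-k+1)$ options (or zero options once $b_i<k$). Multiplying over $k$ gives $\prod_{k=1}^m \mathcal{S}_{k-1}(a_i,b_i)$, and the $\max$ in the definition of $\mathcal{S}_{k-1}$ correctly forces the product to vanish in the degenerate regime. Multiplying across the $r$ coordinates and then dividing by $m!$ produces the stated formula.

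There is no real obstacle; the only point that requires care is the boundary case $b_i<m$, where no $m$-clique can live in $K[a_i,b_i]$ and the formula must therefore vanish. This is handled automatically by the $\max$ in $\mathcal{S}_{k-1}(a_i,b_i)=\max\{a_i(b_i-k+1),0\}$, which contributes a zero factor as soon as $k>b_i$.
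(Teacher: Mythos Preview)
Your proof is correct and is essentially the paper's argument in different packaging: both hinge on the observation that, in each factor $K[a_i,b_i]$, the projections of a clique occupy distinct partite sets, so a vertex adjacent to a fixed $(k-1)$-clique has exactly $\prod_{i=1}^r \mathcal S_{k-1}(a_i,b_i)$ choices. The paper phrases this as an induction on $m$ via double-counting pairs $(w,C)$ with $w\in C$ and $C$ an $(m{+}1)$-clique, whereas you unroll the same sequential count into a direct enumeration of ordered $m$-tuples and then divide by $m!$.
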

\begin{proof}
When $m=1$, the theorem states that $X$ has $\prod_{i=1}^ra_ib_i$ vertices, which is certainly true. We proceed by induction on $m$. Let $\text{CL}(t)$ denote the set of cliques of order $t$ in $X$. We simply need to show that 
\begin{equation}\label{Eq4} 
|\text{CL}(m+1)|=\dfrac{|\text{CL}(m)|}{m+1}\prod_{i=1}^r\mathcal S_m(a_i,b_i).
\end{equation} 
This is obvious if $\text{CL}(m)=\emptyset$, so assume $\text{CL}(m)$ is nonempty. Of course, $(m+1)|\text{CL}(m+1)|$ is the number of pairs $(w,C)$, where $C\in \text{CL}(m+1)$ and $w\in C$. This is also the number of pairs $(w,D)$, where $D\in\text{CL}(m)$ and $w$ is a vertex of $X$ that is adjacent to every element of $D$ (if $w$ is such a vertex, then $w\not\in D$ since no vertex is adjacent to itself). Thus, it suffices to show that for every $D\in\text{CL}(m)$, there are precisely $\prod_{i=1}^r\mathcal S_m(a_i,b_i)$ vertices that are adjacent to every element of $D$.  

Choose $D=\{x_1,\ldots,x_m\}\in\text{CL}(m)$. Recall that each vertex $x_j$ is a vertex in a direct product graph. Thus, we may write $x_j=(y_{1j},\ldots,y_{rj})$, where each $y_{ij}$ is a vertex in $K[a_i,b_i]$. The vertices of $X$ adjacent to every vertex in $D$ are precisely the tuples $(z_1,\ldots,z_r)$ such that $z_i$ is adjacent to $y_{ij}$ for all $i\in\{1,2,\ldots,r\}$ and $j\in\{1,2,\ldots,m\}$. It follows from the fact that $D$ is a clique that for each $i\in\{1,2,\ldots,r\}$, the vertices $y_{i1},\ldots,y_{im}$ are in distinct partite sets in $K[a_i,b_i]$. This implies that there are $a_i(b_i-m)=\mathcal S_m(a_i,b_i)$ choices for $z_i$. Hence, the total number of vertices $(z_1,\ldots,z_r)$ that are adjacent to all vertices in $D$ is $\prod_{i=1}^r\mathcal S_m(a_i,b_i)$. \qedhere   
\end{proof}

\end{document}